\documentclass[11pt,reqno]{amsart}

\usepackage{amssymb}

\setlength{\textheight}{23cm}
\setlength{\textwidth}{16cm}
\setlength{\topmargin}{-0.8cm}
\setlength{\parskip}{0.3\baselineskip}
\hoffset=-1.4cm

\numberwithin{equation}{section}

\baselineskip=15pt

\begin{document}

\newtheorem{theorem}{Theorem}[section]
\newtheorem{proposition}[theorem]{Proposition}
\newtheorem{lemma}[theorem]{Lemma}
\newtheorem{corollary}[theorem]{Corollary}

\theoremstyle{definition}
\newtheorem{definition}[theorem]{Definition}
\newtheorem{assumption}[theorem]{Assumption}

\newcommand\Ga{{\Gamma}}
\newcommand\Z{{\mathbb Z}}

\title[Sasakian and parabolic Higgs bundles]{Sasakian and parabolic Higgs bundles}

\author[I. Biswas]{Indranil Biswas}

\address{School of Mathematics, Tata Institute of Fundamental
Research, Homi Bhabha Road, Mumbai 400005, India}

\email{indranil@math.tifr.res.in}

\author[M. Mj]{Mahan Mj}

\address{School of Mathematics, Tata Institute of Fundamental
Research, Homi Bhabha Road, Mumbai 400005, India}

\email{mahan@math.tifr.res.in}

\subjclass[2010]{14P25, 57M05, 14F35, 20F65 (Primary); 57M50, 57M07, 20F67 (Secondary)}

\keywords{Sasakian manifold, Higgs bundle, parabolic structure, ramified bundle}

\date{}

\begin{abstract}
Let $M$ be a quasi-regular compact connected Sasakian manifold, and let $N\,=\, M/S^1$ be the base
projective 
variety. We establish an equivalence between the class of Sasakian $G$--Higgs bundles over 
$M$ and the class of parabolic (or equivalently, ramified) $G$--Higgs bundles over the 
base $N$.
\end{abstract}

\maketitle

\section{Introduction}\label{se1} 

Let $M$ be a quasi-regular compact Sasakian manifold. The circle group $S^1\,=\,
\text{U}(1)$ acts on
$M$; let $N$ be the corresponding quotient, which is a normal complex projective variety.
Let $G$ be a complex reductive affine algebraic group.
Sasakian Higgs bundles on $M$ with structure group $G$ can be looked at from a 
number of different points of view:

\begin{enumerate}
\item[(1)] As a holomorphic Sasakian principal $G-$bundle over $M$ equipped with a Higgs 
field.

\item[(2)] As a ramified $G$--Higgs bundle over $N$ with ramification locus a normal crossing 
divisor $D$ in $N$.

\item[(3)] As a parabolic $G$--Higgs bundle over $N$ with parabolic structure over $D$ and 
rational parabolic weights.
\end{enumerate} 

The equivalence of (2) and (3) was established in \cite{Bi2}; it should be clarified
that the constructions, and methods, in \cite{Bi2} were greatly motivated by
\cite{BBN2}, \cite{Bi1}. The purpose of this 
paper is to establish an equivalence between (1) and (3). It should be mentioned that
while the Sasakian manifolds are never complex manifolds (their dimension is odd), there
is a natural procedure of defining holomorphic objects on them; the Reeb vector field plays
a crucial role in this process. The details are recalled in Section \ref{se:holvec}.

Let $\Gamma$ be the fundamental 
group of $M$. In \cite{BM} we established a fourth equivalence, the 
Donaldson-Corlette-Hitchin-Simpson correspondence between representations of $\Gamma$ and 
Sasakian $G-$Higgs bundles: \\ Any homomorphism
$$\rho\, :\, \Gamma\,\longrightarrow\, G$$ with the Zariski closure of $\rho(\Gamma)$
reductive canonically gives a virtually basic polystable principal $G$--Higgs bundle on
$M$ with vanishing rational characteristic classes. Conversely, any 
virtually basic polystable principal $G$--Higgs bundle on $M$ with 
vanishing rational characteristic classes corresponds to a flat principal
$G$--bundle on $M$ with the property that the Zariski closure of the monodromy
representation is reductive \cite{BM}. 

Thus, the equivalence of (1) and (3), proven in this paper, completes the equivalence of 
these four perspectives.

\section{Higgs bundles on quasi-regular Sasakian manifolds}

\subsection{Sasakian manifolds}\label{se2.1}

Let $M$ be a quasi-regular compact Sasakian manifold of dimension $2d+1$ (see \cite{BG} 
for definitions and properties). Then ${\rm U}(1)$ acts on $M$ and the action is free 
over a dense open subset of $M$. The action is free everywhere if and only if $M$ is 
regular. Without loss of generality, we assume that the action of ${\rm U}(1)$ on $M$ is 
on the left. (This is not important as ${\rm U}(1)$ is abelian.) Let $$N\,:=\, {\rm 
U}(1)\backslash M$$ be the quotient space; we will refer to $N$
as the base space for $M$. This base $N$ is a normal complex projective 
variety. The subset of $M$ over which the action of ${\rm U}(1)$ is not free will be 
denoted by $D_M$. Let
\begin{equation}\label{p}
p\,:\, M\,\longrightarrow\, N
\end{equation}
be the quotient map, so that $p$ gives a $C^\infty$ principal ${\rm U}(1)$--bundle over the
complement $N- p(D_M)$. Although the structure group acts on the right for a principal
bundle, since ${\rm U}(1)$ is abelian we do not
need to distinguish between left and rights actions.

\begin{assumption}\label{assum}
We assume the following:
\begin{enumerate}
\item The quotient space $N$ is a smooth variety.

\item The subset $D_N\, :=\, p(D_M)\, \subset\, N$ is a simple normal
crossing divisor.
\end{enumerate}
\end{assumption}

A simple minded example would be the following: Let $B_0$ be an orbifold surface, and let $p_1\, 
:\, S\, \longrightarrow\, B_0$ a principal $\text{U}(1)$ bundle such that the characteristic 
class of it is nonzero. So $S$ is a Seifert fibered three manifold with base $B_0$; the 
characteristic class is an element of $H^2(B_0,\, {\mathbb Z})$. We can put a Sasakian structure 
on $S$ such that $B_0$ is the base of it with $p_1$ being the projection to the base of the 
Sasakian manifold. These are precisely the quasi-regular compact Sasakian three-manifolds.
Now let $Y$ be a complex projective manifold. Take $M\,=\, S\times Y$, $N\,=\, B_0\times Y$
and $p\,=\, p_1\times\text{Id}_Y$. It is easy to see that this they satisfy the two conditions
stated above.

Getting back the general situation,
the first of the two conditions means that $N$ is a smooth complex projective variety of complex 
dimension $d$. Note that the first condition implies that $D_N$ is a divisor of $N$. The 
second condition means that each irreducible component of $D_N$ is a smooth sub-variety of 
$N$ of dimension $d-1$, and these irreducible components intersect transversally. The 
first condition is a strong assumption. As mentioned before, $N$ is a normal variety, so 
smoothness is a strong assumption. The first condition rules out for example 
singularities isomorphic to the quotient of ${\mathbb C}^2$ by the involution $(x,\, y)\, 
\longmapsto\, (-x,\, -y)$. In view of the first condition, the second condition is rather 
mild.

\subsection{Holomorphic principal bundles}\label{se:holvec}

We now recall from \cite{BM} the definition of a Sasakian Higgs bundle.
The Riemannian metric and the Reeb vector field on $M$ will be denoted by $g$ and $\xi$
respectively.
The almost complex structure on the orthogonal complement $$\xi^\perp\, \subset\, TM$$
with respect to $g$ produces a type decomposition
$$
\xi^\perp\otimes_{\mathbb R} {\mathbb C}\,=\, F^{1,0}\oplus F^{0,1}\, .
$$
Let
\begin{equation}\label{w01}
\widetilde{F}^{0,1}\, :=\, F^{0,1}\oplus (\xi\otimes_{\mathbb R}
\mathbb C)\, \subset\, TM\otimes_{\mathbb R} {\mathbb C}
\end{equation}
be the distribution. It is known that
this distribution $\widetilde{F}^{0,1}$ is integrable \cite[p.~550, Lemma~3.2]{BS}.

Let $G$ be a complex reductive
affine algebraic group and $q\, :\, E_G\,\longrightarrow\, M$ a $C^\infty$ principal
$G$--bundle on $M$. Let $$dq\, :\, TE_G\,\longrightarrow\, q^* TM$$ be the differential
of the projection $q$. A partial 
connection on $E_G$ in the direction of $\xi$ is a $G$--equivariant homomorphism over $E_G$
$$
D_0\, :\, q^*\xi \,\longrightarrow\, TE_G
$$
such that $(dq)\circ D_0$ coincides with the identity map of the line sub-bundle $q^*\xi
\, \subset\, q^*TM$. In other words, 
$D_0$ is a $G$--equivariant lift of $\xi$ to the total space of $E_G$. A \textit{Sasakian 
principal $G$--bundle} on $M$ is a $C^\infty$ principal $G$--bundle $E_G$ on $M$ equipped 
with a partial connection $D_0$ in the direction of $\xi$.

Let $(E_G,\, D_0)$ be a Sasakian principal 
$G$--bundle on $M$ as above. A holomorphic structure on $E_G$ is a $C^\infty$ sub-bundle
\begin{equation}\label{hd}
{\mathcal D}\, \subset\, TE_G\otimes_{\mathbb R}{\mathbb C}
\end{equation}
such that the following four conditions hold:
\begin{enumerate}
\item the action of $G$ on $TE_G\otimes_{\mathbb R}{\mathbb C}$ given by the action of
$G$ on $E_G$ preserves ${\mathcal D}$,

\item the complexified differential
$$
dq\otimes_{\mathbb R}{\mathbb C}\, :\, TE_G\otimes_{\mathbb R}{\mathbb C}\,\longrightarrow\,
q^* (TM\otimes_{\mathbb R}{\mathbb C})\,=\, (q^*TM)\otimes_{\mathbb R}{\mathbb C}
$$
is an isomorphism from the sub-bundle ${\mathcal D}\, \subset\, TE_G\otimes_{\mathbb R}
{\mathbb C}$ to the sub-bundle $$q^*\widetilde{F}^{0,1} \, \subset\, q^* (TM\otimes_{\mathbb R}
{\mathbb C})\, ,$$ where $\widetilde{F}^{0,1}$ is constructed in \eqref{w01},

\item the sub-bundle ${\mathcal D}\, \subset\, TE_G\otimes_{\mathbb R}
{\mathbb C}$ is closed under the Lie bracket operation of vector fields of $E_G$, and

\item $D_0(q^*\xi)\otimes 1\, \subset\, {\mathcal D}$, meaning
the image of the homomorphism $D_0$ is contained in ${\mathcal D}$.
\end{enumerate}

A \textit{holomorphic Sasakian principal $G$--bundle} is
a Sasakian principal $G$--bundle equipped with a holomorphic structure. When $G\,=\,
\text{GL}(r, {\mathbb C})$, then a holomorphic Sasakian principal $G$--bundle is
a holomorphic Sasakian vector bundle of rank $r$ (see \cite{BS}).

Let $(E_G,\, D_0)$ be a Sasakian principal $G$--bundle. Note that any vector bundle 
associated to $E_G$ is a Sasakian vector bundle where the partial connection is given by 
$D_0$. In particular, the adjoint vector bundle $\text{ad}(E_G)$ is a Sasakian vector 
bundle. A holomorphic structure ${\mathcal D}$ on $(E_G,\, D_0)$ produces a holomorphic 
structure on any associated vector bundle (associated to $E_G$ by a holomorphic 
representation of $G$); see \cite{BM}. The trivial line bundle $M\times{\mathbb C}$
has a trivial holomorphic structure. The sub-bundle in \eqref{hd} for this
trivial holomorphic structure is the direct summand $q^*_0\widetilde{F}^{0,1}$
of $T(M\times{\mathbb C})\otimes {\mathbb C}\,=\, q^*_0\widetilde{F}^{0,1}\oplus
\widetilde{F}^{1,0}\oplus q_1^* T{\mathbb C}$, where $q_0$ and $q_1$ are the projections of
$M\times{\mathbb C}$ to $M$ and $\mathbb C$ respectively (the
holomorphic tangent bundle $T{\mathbb C}$ is of course trivial). A \text{holomorphic section} of a
holomorphic Sasakian vector bundle $((E,\, D_0),\,{\mathcal D})$ is a homomorphism from $M\times
{\mathbb C}$ to $E$ such that it takes the above direct summand $q^*_0\widetilde{F}^{0,1}$
to ${\mathcal D}$.

A Higgs field on a Sasakian holomorphic
principal $G$--bundle $(E_G,\, {\mathcal D})$ is a holomorphic section $\theta$ of
$\text{ad}(E_G)\otimes (F^{1,0})^*$ such that the following two conditions
hold:
\begin{enumerate}
\item the section $\theta\bigwedge\theta$ of
$\text{ad}(E)\otimes (\bigwedge^2 F^{1,0})^*$ vanishes identically, and

\item for any point $x\, \in\, N- D_N\,=\, N- p(D_M)$, where $p$ is the map in \eqref{p},
the restriction of the section $\theta$ to the loop $p^{-1}(x)$ is flat with respect
to the partial connection $D_0$ along $p^{-1}(x)$.
\end{enumerate}

The second condition means that the $G$--Higgs bundle $((E_G,\, D),\, \theta)$ is 
virtually basic (see \cite[Definition 4.2]{BM} for the definitions of basic and
virtually basic $G$--Higgs bundles).

\section{Ramified $G$--Higgs bundles}

\subsection{Ramified $G$--bundles}

Let $X$ be an irreducible smooth projective variety of dimension $d$ defined over 
$\mathbb C$. Let $D\, \subset\, X$ be a simple normal crossing divisor. As before, let 
$G$ be a complex reductive affine algebraic group. The Lie algebra of $G$ will be
denoted by $\mathfrak g$.

A \textit{ramified $G$--bundle} over $X$ with ramification over
the divisor $D$ is a smooth complex
quasi-projective variety $E_G$ equipped with a right
algebraic action of $G$
\begin{equation}\label{f}
f \, :\, E_G\times G\, \longrightarrow\, E_G
\end{equation}
and a surjective algebraic map
\begin{equation}\label{psi}
\psi\, :\, E_G\, \longrightarrow\, X\, ,
\end{equation}
such that the following five conditions hold:
\begin{itemize}
\item{} $\psi\circ f \, =\, \psi\circ p_1$, where $p_1$ is
the projection of $E_G\times G$ to $E_G$,

\item{} for each point $x\, \in\, X$, the action of $G$ on the
reduced fiber $\psi^{-1}(x)_{\text{red}}$ is transitive,

\item{} the restriction of $\psi$ to $\psi^{-1}(X- D)$ makes
$\psi^{-1}(X- D)$ a principal $G$--bundle over
$X- D$, meaning the map $\psi$ is smooth over
$\psi^{-1}(X- D)$ and the map to the fiber product
$$
\psi^{-1}(X- D)\times G\, \longrightarrow\,
\psi^{-1}(X- D)\times_{X- D} 
\psi^{-1}(X- D)
$$
defined by $(z,\, g) \longmapsto\, (z,\, f(z,\,g))$ is an
isomorphism,

\item{} for each irreducible component $D_i\, \subset\, D$,
the reduced inverse image $\psi^{-1}(D_i)_{\text{red}}
\, \subset\, \psi^{-1}(D_i)$ is a smooth divisor and
$$
\widehat{D}\, :=\, \sum_{i=1}^\ell \psi^{-1}(D_i)_{\text{red}}
$$
is a normal crossing divisor on $E_G$, and

\item{} for any smooth point $z\in \widehat{D}$, the isotropy
group $G_z\, \subset\, G$, for the action of $G$ on $E_G$,
is a finite cyclic group that acts faithfully on the
quotient line $T_zE_G/T_z\psi^{-1}(D)_{\text{red}}$.
\end{itemize}
(See \cite{BBN2}, \cite{Bi1}.)

Let $\text{Pvect}(X)$ denote the category of parabolic vector bundles over $X$ with 
parabolic structure over $D$ and rational parabolic weights. Let $\text{Rep}(G)$ denote 
the category of all finite dimensional rational left representations of $G$. A parabolic 
$G$--bundle over $X$ with $D$ as the parabolic divisor is defined to be a functor from 
$\text{Rep}(G)$ to $\text{Pvect}(X)$ that is compatible with the operations of taking 
direct sum, tensor product and dual. See \cite{BBN1}, \cite[Section 2]{Bi1}. This 
definition is based on \cite{No}.

There is a natural equivalence of categories
between parabolic $G$--bundles and ramified $G$--bundles.
(See \cite{BBN2}, \cite{Bi1}.)

\subsection{Ramified $G$--Higgs bundle}

Let $(E_G,\, \psi)$ be a ramified $G$--bundle
as in \eqref{psi}. The algebraic tangent bundle on
$E_G$ will be denoted by $TE_G$. Let
\begin{equation}\label{k0}
{\mathcal K} \, \subset\, TE_G
\end{equation}
be the sub-bundle defined by the orbits of the action of
$G$ on $E_G$. So for any $z\, \in\, E_G$, the fiber
${\mathcal K}_z\, \subset\, T_zE_G$ is the image of the
differential $$df_z(e)\, :\, {\mathfrak g}\, \longrightarrow\,
T_zE_G$$ of the map $f_z\, :\, G\, \longrightarrow\,
E_G$, $g\, \longmapsto\, f(z,\, g)$, where $f$ is the map
in \eqref{f}. Since $df_z(e)$ is an isomorphism onto its image,
which coincides with the vertical tangent space for $\psi$, we have
an algebraic isomorphism of vector bundles
\begin{equation}\label{eta}
\eta\,:\, E_G\times {\mathfrak g}\, \longrightarrow\,
{\mathcal K}\, .
\end{equation}
This $\eta$ is an isomorphism of sheaves of Lie algebras; the
Lie algebra operation on the sheaf of sections of $\mathcal K$
is given by the Lie bracket of vector fields.

Let ${\mathcal Q}$ denote the quotient
vector bundle $TE_G/{\mathcal K}$.
So we have a short exact sequence of vector bundles
\begin{equation}\label{ex.p.0}
0\, \longrightarrow\, {\mathcal K}\, \longrightarrow\, TE_G \,
\stackrel{q}{\longrightarrow}\,{\mathcal Q} \, \longrightarrow\,0
\end{equation}
over $E_G$. The action of $G$ on $E_G$ induces an action of
$G$ on the tangent bundle $TE_G$. This action of $G$ on
$TE_G$ clearly preserves the sub-bundle ${\mathcal K}$.
It may be mentioned that the isomorphism
$\eta$ in \eqref{eta} intertwines the action of $G$ on
${\mathcal K}$ and the diagonal action of $G$ constructed
using the adjoint action of $G$ on $\mathfrak g$.
Therefore, we have an induced action of $G$ on the quotient
bundle $\mathcal Q$.

Let
\begin{equation}\label{th0}
\theta_0\, \in\,
H^0(E_G,\, {\mathcal H}om({\mathcal Q}\, ,{\mathcal K}))
\, =\, H^0(E_G,\, {\mathcal K}\otimes{\mathcal Q}^*)
\end{equation}
be an algebraic section. We note that the actions of $G$
on ${\mathcal K}$ and ${\mathcal Q}$ together define
an action of $G$ on the complex vector space
$H^0(E_G,\, {\mathcal H}om({\mathcal Q}\, ,{\mathcal K}))$.

Combining
the exterior algebra structure of $\bigwedge
{\mathcal Q}^*$ and the Lie algebra structure on the fibers
of the vector bundle ${\mathcal K}\, =\, E_G\times{\mathfrak g}$
(see \eqref{eta}), we have a homomorphism
\begin{equation}\label{tau}
\tau\, :\, ({\mathcal K}\otimes{\mathcal Q}^*)\otimes
({\mathcal K}\otimes{\mathcal Q}^*)\, \longrightarrow\,
{\mathcal K}\otimes(\bigwedge\nolimits^2{\mathcal Q}^*)\, .
\end{equation}
So $\tau ((A_1\otimes\omega_1)\otimes(A_2\otimes
\omega_2))\,=\, [A_1\, ,A_2]\otimes (\omega_1\bigwedge
\omega_2)$. We will denote $\tau (a,\, b)$ also by $a\bigwedge b$.

\begin{definition}\label{def1}
A {\it Higgs field} on a ramified $G$--bundle
$E_G$ is a section
$$
\theta_0\, \in\,
H^0(E_G,\, {\mathcal K}\otimes {\mathcal Q}^*)
$$
{\rm as in \eqref{th0} satisfying the following two
conditions:}
\begin{enumerate}
\item the action of $G$ on $H^0(E_G,\, {\mathcal K}
\otimes{\mathcal Q}^*)$ leaves $\theta_0$ invariant, and
\item $\theta_0\bigwedge\theta_0\, =\, 0$ (see
\eqref{tau}).
\end{enumerate}
\end{definition}

\begin{definition}\label{def1p}
A {\it ramified Higgs $G$--bundle} is a
pair $(E_G\, ,\theta_0)$, where $E_G$ is a
ramified $G$--bundle, and $\theta_0$ is a
Higgs field on $E_G$.
\end{definition}

Let
\begin{equation}\label{dag}
{\mathcal A}_{E_G}\, :=\,
(\psi_*({\mathcal K}\otimes {\mathcal Q}^*))^G\,\subset\, \psi_*({\mathcal K}\otimes {\mathcal Q}^*)
\end{equation}
be the invariant direct image, where $\psi$ is the
projection in \eqref{psi}. Therefore,
\begin{equation}\label{e0}
H^0(X,\, {\mathcal A}_{E_G})\, =\,
H^0(E_G,\, {\mathcal K}\otimes{\mathcal Q}^*)^G\, .
\end{equation}

For $i\, \geq\, 0$, let
\begin{equation}\label{e00}
\widetilde{\mathcal K}_i\, :=\, (\psi_* ({\mathcal K}
\otimes (\bigwedge\nolimits^i{\mathcal Q}^*)))^G\,\subset\,
\psi_* ({\mathcal K}\otimes (\bigwedge\nolimits^i{\mathcal Q}^*))
\end{equation}
be the invariant direct image. So,
$\widetilde{\mathcal K}_1\, =\, {\mathcal A}_{E_G}$.
The homomorphism $\tau$ in \eqref{tau} yields a homomorphism
\begin{equation}\label{e1}
\widetilde{\tau}\, :\, \widetilde{\mathcal K}_1\otimes
\widetilde{\mathcal K}_1\, \longrightarrow\,
\widetilde{\mathcal K}_2\, .
\end{equation}

See \cite[Lemma 2.3]{Bi2} for the following lemma.

\begin{lemma}\label{lem1}
A Higgs field on $E_G$ is a section
$$
\theta\, \in\, H^0(X,\, {\mathcal A}_{E_G})
$$
such that
$$
\widetilde{\tau}(\theta,\, \theta)
\,=\, 0\, ,
$$
where $\widetilde{\tau}$ is constructed in \eqref{e1}.
\end{lemma}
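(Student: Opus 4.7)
The plan is to translate Definition \ref{def1}, which is phrased in terms of $G$-invariant data upstairs on $E_G$, into the downstairs description on $X$ by systematically applying the invariant direct image functor $(\psi_*(-))^G$. The two conditions in Definition \ref{def1}, namely $G$-invariance of $\theta_0$ and the vanishing $\theta_0\bigwedge\theta_0\,=\,0$, should correspond respectively to (a) the mere existence of the associated downstairs section $\theta\,\in\, H^0(X,\,\mathcal{A}_{E_G})$, and (b) the vanishing $\widetilde{\tau}(\theta,\,\theta)\,=\,0$.

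For step (a), I would invoke the identity \eqref{e0}, which identifies $G$-invariant global sections of $\mathcal{K}\otimes\mathcal{Q}^*$ on $E_G$ with global sections of $\mathcal{A}_{E_G}$ on $X$. So a Higgs field $\theta_0$ in the sense of Definition \ref{def1} determines, and is determined by, a section $\theta\,\in\, H^0(X,\,\mathcal{A}_{E_G})$. Under the same identification, $H^0(E_G,\, \mathcal{K}\otimes\bigwedge^2\mathcal{Q}^*)^G\,=\, H^0(X,\,\widetilde{\mathcal{K}}_2)$ by \eqref{e00}.

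For step (b), the crucial point is that the homomorphism $\tau$ from \eqref{tau} is $G$-equivariant: the Lie bracket on the fibers of $\mathcal{K}\,=\, E_G\times\mathfrak{g}$ corresponds under $\eta$ to the Lie bracket on $\mathfrak g$, which is equivariant for the adjoint action, while the exterior product on $\bigwedge\mathcal{Q}^*$ is manifestly $G$-equivariant since the $G$-action on $\mathcal{Q}$ was induced from the $G$-action on $TE_G$. Consequently, $\tau$ descends to a homomorphism between invariant direct images, which is exactly $\widetilde{\tau}$ as defined in \eqref{e1}. Pushing forward the section $\theta_0\bigwedge\theta_0$ then shows that $\theta_0\bigwedge\theta_0\,=\,0$ if and only if $\widetilde{\tau}(\theta,\,\theta)\,=\,0$.

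The main obstacle I anticipate is not conceptual but bookkeeping: one must carefully verify the $G$-equivariance of $\tau$ in the presence of the ramification divisor $\widehat{D}$, making sure the identification $\eta$ in \eqref{eta} is compatible with the $G$-actions globally (not only on $\psi^{-1}(X-D)$ where $E_G$ is a genuine principal bundle), and that $\tau$ is well-defined and $G$-equivariant on all of $E_G$. Once the equivariance of $\tau$ is checked, the functoriality of invariant direct image gives the equivalence with essentially no further work, justifying the reference to \cite[Lemma 2.3]{Bi2}.
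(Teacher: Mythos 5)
Your proposal is correct and follows exactly the argument the paper relies on: the paper gives no proof of its own but defers to \cite[Lemma~2.3]{Bi2}, and the content of that lemma is precisely the unwinding you describe, namely the identification \eqref{e0} converting $G$--invariance of $\theta_0$ into a section $\theta$ of ${\mathcal A}_{E_G}$, together with the $G$--equivariance of $\tau$ (via $\eta$ and the adjoint action) descending $\theta_0\bigwedge\theta_0$ to $\widetilde{\tau}(\theta,\theta)$ under \eqref{e00}. Nothing further is needed.
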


\subsection{The adjoint vector bundle}

We noted earlier that there is a natural equivalence of categories
between parabolic $G$--bundles and ramified $G$--bundles
(see \cite{BBN2}, \cite{Bi1}). Let $E^P_G$ denote the
parabolic $G$--bundle corresponding to a ramified $G$--bundle
$E_G$. We also recall that $E^P_G$ associates a parabolic
vector bundle over $X$ to each object in $\text{Rep}(G)$. Let
\begin{equation}\label{add0}
\text{ad}(E_G)\, :=\, E^P_G (\mathfrak g)
\end{equation}
be the parabolic vector bundle over $X$ associated to the
parabolic $G$--bundle $E^P_G$ for the adjoint action of
$G$ on its Lie algebra $\mathfrak g$. This parabolic vector
bundle $\text{ad}(E_G)$ will be called the \textit{adjoint
vector bundle} of $E_G$. The vector bundle underlying the
parabolic vector bundle $\text{ad}(E_G)$ will
also be denoted by $\text{ad}(E_G)$. From the
context it will be clear which one is being referred to.

Consider the vector bundle ${\mathcal K}\, \longrightarrow\,
E_G$ constructed in \eqref{k0}. We noted that ${\mathcal K}$
is equipped with a natural action of $G$.
It is straight forward to check that the
invariant direct image $(\psi_*{\mathcal K})^G$
is identified with the vector bundle underlying the
parabolic vector bundle $\text{ad}(E_G)$ constructed in \eqref{add0}.
Indeed, this follows from the fact that for a usual principal
bundle, its adjoint vector bundle coincides with the invariant
direct image of the relative tangent bundle. Therefore, we have
\begin{equation}\label{add}
\text{ad}(E_G)\, =\, (\psi_*{\mathcal K})^G\,\subset\, \psi_*{\mathcal K}\, .
\end{equation}

There is a natural ${\mathcal O}_X$--linear homomorphism
\begin{equation}\label{a2}
\text{ad}(E_G)\otimes \Omega^1_X\, \longrightarrow\,
{\mathcal A}_{E_G}\, ,
\end{equation}
where ${\mathcal A}_{E_G}$ is constructed in \eqref{dag} \cite[(2.13)]{Bi2}.
This homomorphism is an isomorphism over the complement $X
- D$, but it is not an isomorphism over $X$ in general. In fact, ${\mathcal A}_{E_G}$
is the vector bundle underlying the parabolic tensor product of $\text{ad}(E_G)$ and
$\Omega^1_X$, so the usual tensor product $\text{ad}(E_G)\otimes \Omega^1_X$ is a sub-sheaf
of ${\mathcal A}_{E_G}$.

There is a natural homomorphism
$$
\tau'\, :\, \bigwedge\nolimits^2 (\text{ad}(E_G)\otimes \Omega^1_X)\,\longrightarrow
\, \text{ad}(E_G)\otimes \Omega^2_X\, .
$$
Consider the isomorphism in \eqref{a2}. This takes $\widetilde\tau$ in \eqref{e1} to the 
above homomorphism $\tau'$. Therefore, from Lemma \ref{lem1} it follows that a Higgs 
field on $E_G$ is a holomorphic section $\theta$ of $\text{ad}(E_G)\otimes \Omega^1_X$ such
that $\tau'(\theta\bigwedge\theta)\,=\, 0$.

Henceforth, a ramified bundle (respectively, ramified Higgs bundle) will also be
called a parabolic bundle (respectively, parabolic Higgs bundle).

\section{Parabolic Higgs bundles and Sasakian Higgs bundles}

In this section, we shall 
establish the equivalence between Sasakian $G-$Higgs bundles over Sasakian manifolds $M$ 
with base $N$ and parabolic $G-$Higgs bundles over the base $N$. Section \ref{p2s} will 
give us a way of going from a parabolic Higgs bundle to a Sasakian Higgs bundle while 
Section \ref{s2p} will give us the reverse path.

\subsection{From parabolic Higgs bundles to Sasakian Higgs bundles}\label{p2s}

Let $Y$ be a complex projective variety and $\Gamma$ a finite
group acting on $Y$ through algebraic
automorphisms. So we have a homomorphism
\begin{equation}\label{h}
h\, :\, \Gamma\, \longrightarrow\, \text{Aut}(Y)\, ,
\end{equation}
where $\text{Aut}(Y)$ is the group of all automorphisms of
the variety $Y$. A $\Gamma$--\textit{linearized} principal $G$--bundle over
$Y$ is a principal $G$--bundle
\begin{equation}\label{ph}
\phi\, :\, F_G\, \longrightarrow\, Y
\end{equation}
and an action of $\Gamma$ on the left of $F_G$
$$
\rho\, :\, \Gamma\times F_G\, \longrightarrow\, F_G
$$
such that the following two conditions hold:
\begin{itemize}
\item the actions of $\Gamma$ and $G$ on $F_G$ commute, and

\item $\phi\circ \rho (\gamma\, ,z)\, =\, h(\gamma)(\phi(z))$
for all $(\gamma\, ,z)\, \in\, \Gamma\times F_G$, where $h$
is the homomorphism in \eqref{h} and $\phi$ is the projection
in \eqref{ph}.
\end{itemize}

Let $\psi\, :\, E_G\, \longrightarrow\, N$ be a parabolic
principal Higgs $G$--bundle. Using the ``Covering lemma'' of
Kawamata (see \cite[Ch. 1.1, p. 303--305]{KMM}) it can
be shown that there is a finite Galois covering
\begin{equation}\label{vp}
\varphi\, :\, Y\, \longrightarrow\, N
\end{equation}
and a $\Gamma$--linearized principal $G$--bundle $F_G$
over $Y$, where $\Gamma\, :=\, \text{Gal}(\varphi)$ is
the Galois group, such that
\begin{equation}\label{v2}
E_G\, =\, \Gamma\backslash F_G
\end{equation}
\cite[Section 4.1]{Bi1}, \cite[Section 4.1]{Bi2}.

Now let $\theta$ be a Higgs field on the parabolic $G$--bundle $E_G$. There is a natural 
linear isomorphism between the Higgs fields on $E_G$ and the $\Gamma$--invariant Higgs 
fields on $F_G$ \cite[Proposition 4.1]{Bi2}. Let $\theta'$ be the $\Gamma$--invariant 
Higgs field on $F_G$ corresponding to the Higgs field $\theta$ on $E_G$.

Fix an ample holomorphic line bundle $L_0$ on $Y$. Define the tensor product
$$
L\, :=\, \bigotimes_{\gamma\in \Gamma} \gamma^*L_0\, ,
$$
which is an ample holomorphic line bundle on $Y$. Note that the action of $\Gamma$ on $Y$ 
has a natural lift to an action of $\Gamma$ on $L$. Take a Hermitian structure $h_0$ on 
$L_0$ such that the curvature $\text{Curv}(L_0,\, h_0)$ of $(L_0,\, h_0)$ is positive. 
Let
$$
h\, :=\, \bigotimes_{\gamma\in \Gamma} \gamma^*h_0
$$
be the Hermitian structure on $L$. The action of $\Gamma$ on $L$ clearly preserves $h$. 
Note that the curvature $\text{Curv}(L,\, h)$ of $(L,\, h)$ coincides with 
$\sum_{\gamma\in \Gamma}\gamma^*\text{Curv}(L_0,\, h_0)$, hence the $(1,\, 1)$--form 
$\text{Curv}(L,\, h)$ is positive.

Let
\begin{equation}\label{p2}
L \, \supset\, \{v\, \in\, L\, \mid\, h(v)\,=\, 1\}\,:=\, M_1
\, \stackrel{p}{\longrightarrow}\, Y
\end{equation}
be the principal ${\rm U}(1)$--bundle over $Y$. Using $h$, and the 
positive form $\text{Curv}(L,\, h)$ on $Y$, there is a regular Sasakian structure
on $M_1$ \cite{BG}. Since $h$ is preserved by the action of $\Gamma$ on $L$, the action
of $\Gamma$ on $L$ preserves $M_1$. The quotient $M\, :=\, M_1/\Gamma$ is a
quasi-regular Sasakian manifold with ${\rm U}(1)\backslash M\, =\, N$.

The pullback $(p^*F_G,\, p^*\theta')$ is a $G$--Higgs bundle
on the Sasakian manifold $M_1$, where $p$ is the projection in \eqref{p2} and $F_G$ is
the principal $G$--bundle in \eqref{v2}. The action of $\Gamma$ on
$(F_G,\, \theta')$ pulls back to an action of $\Gamma$ on $(p^*F_G,\, p^*\theta')$.
Consequently, $(p^*F_G,\, p^*\theta')$ produces a $G$--Higgs bundle on $M$.

Therefore, we have the following:

\begin{proposition}\label{prop1}
Given a ramified $G$--Higgs bundle $(E_G,\, \theta)$ on $N$, there is a Sasakian manifold
$M$ over $N$ and a $G$--Higgs bundle on $M$.
\end{proposition}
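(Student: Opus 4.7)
Most of the construction is already assembled in the discussion preceding the statement, so my plan is first to organize the data produced, then to build the Sasakian $G$--Higgs bundle structure on an intermediate cover, and finally to descend.

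I would first list the ingredients: the Kawamata Galois cover $\varphi\colon Y\to N$ with group $\Gamma$, the $\Gamma$--linearized principal $G$--bundle $F_G\to Y$ satisfying $E_G=\Gamma\backslash F_G$, the $\Gamma$--invariant Higgs field $\theta'$ on $F_G$ corresponding to $\theta$ via \cite[Proposition~4.1]{Bi2}, the $\Gamma$--equivariant positive Hermitian line bundle $(L,h)$, its unit circle bundle $M_1$ (a regular Sasakian manifold by \cite{BG}), and the quasi-regular compact Sasakian manifold $M=M_1/\Gamma$ satisfying $\mathrm{U}(1)\backslash M=N$.

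Next I would construct the Sasakian $G$--Higgs bundle on $p^*F_G\to M_1$ and verify $\Gamma$--equivariance. The partial connection $D_0$ along the Reeb field is tautological, since $p^*F_G$ is pulled back along the $\mathrm{U}(1)$--fibration $p$, so local sections of $F_G$ pull back to sections that are parallel in the Reeb direction. The holomorphic structure $\mathcal{D}\subset T(p^*F_G)\otimes\mathbb{C}$ is the direct sum of the image of $D_0$ and the pullback under $dp$ of the $(0,1)$--part of the holomorphic structure on $F_G$, using that $dp$ identifies $F^{0,1}$ on $M_1$ with $T^{0,1}Y$; integrability and $G$--invariance are inherited from the corresponding properties on $Y$. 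For the Higgs field, I would set $\widetilde\theta:=p^*\theta'$: because $\theta'$ is a holomorphic section of $\mathrm{ad}(F_G)\otimes\Omega^1_Y$ and $dp$ gives a natural identification of $(F^{1,0})^*$ with $p^*\Omega^1_Y$, the pullback lies in $\mathrm{ad}(p^*F_G)\otimes(F^{1,0})^*$ and is a holomorphic Sasakian section; the condition $\widetilde\theta\wedge\widetilde\theta=0$ is the pullback of $\theta'\wedge\theta'=0$, and virtual basicness is immediate because $\widetilde\theta$, being pulled back from $Y$, is automatically flat along every $\mathrm{U}(1)$--orbit with respect to $D_0$. Since the entire package is $\Gamma$--equivariant, it descends to a Sasakian $G$--Higgs bundle on $M$.

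The main obstacle I anticipate is the bookkeeping at the singular (non-free) locus: one must confirm that the cyclic isotropy of $\Gamma$ at preimages of the branch divisor of $\varphi$ is arranged to match both the cyclic isotropy built into the definition of the ramified $G$--bundle $E_G$ and the $\mathrm{U}(1)$--isotropy of $M$ at $D_M$, so that $D_N=p(D_M)$ really coincides with the parabolic divisor of $E_G$ and the descended partial connection extends across $D_M$. The Kawamata covering lemma together with the construction in \cite[Section~4.1]{Bi2} were designed precisely to synchronize these data, so the verification, while delicate, is essentially forced by the choices already made.
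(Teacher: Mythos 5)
Your proposal is correct and follows essentially the same route as the paper: the paper's ``proof'' of Proposition \ref{prop1} is precisely the preceding construction (Kawamata cover $\varphi\colon Y\to N$, the $\Gamma$--linearized $F_G$ with invariant Higgs field $\theta'$, the equivariant circle bundle $M_1$ of $(L,h)$, and descent of $(p^*F_G,\,p^*\theta')$ to $M=M_1/\Gamma$). Your added detail on how the partial connection, holomorphic structure, and virtual basicness arise on the pullback is a useful elaboration of steps the paper leaves implicit, but it is not a different argument.
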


\subsection{From Sasakian Higgs bundles to parabolic Higgs bundles}\label{s2p}

Let $((E_G,\, {\mathcal D}),\, \theta)$ be a $G$--Higgs bundle on the Sasakian manifold
$M$ in Section \ref{se2.1}.

\begin{proposition}\label{prop2}
The quotient ${\rm U}(1)\backslash E_G$ is a ramified holomorphic principal $G$--bundle on
${\rm U}(1)\backslash M\,=\, N$.

The Higgs field $\theta$ produces a Higgs field on ${\rm U}(1)\backslash E_G$.
\end{proposition}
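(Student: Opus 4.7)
The plan is to construct the ramified bundle on $N$ by descent through a regular Kawamata cover, thereby inverting the construction of Section \ref{p2s}.

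First I would invoke the Kawamata cover of Section \ref{p2s}: a finite Galois cover $\varphi\colon Y\to N$ with Galois group $\Gamma$ such that the associated principal $U(1)$-bundle $p\colon M_1 \to Y$ is regular, and $M = \Gamma\backslash M_1$. Pulling $E_G$ back along $M_1 \to M$ produces a Sasakian principal $G$-bundle $\widetilde{E}_G$ on the regular Sasakian manifold $M_1$. Because $M_1 \to Y$ is now a free principal $U(1)$-bundle, the pulled-up partial connection integrates to a genuine $U(1)$-action on $\widetilde{E}_G$ commuting with $G$; this is the content of the virtually basic hypothesis becoming valid on the cover. I would then form the quotient $F_G \,:=\, U(1)\backslash \widetilde{E}_G$, which is a smooth principal $G$-bundle on $Y$, and check that the pulled-back distribution $\mathcal{D}$ descends --- using its identification with $q^*\widetilde{F}^{0,1}$ and the fact that on the regular Sasakian manifold $M_1$ this distribution contains the $U(1)$-orbit direction --- to a holomorphic structure on $F_G$. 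GAGA on the projective manifold $Y$ then makes $F_G$ algebraic.

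Next I would observe that the $\Gamma$-action on $M_1$ lifts tautologically to $\widetilde{E}_G$ via the pullback construction, commutes with $U(1)$, and hence induces a $\Gamma$-linearization of $F_G$. By \cite[Section 4.1]{Bi1} the quotient $\Gamma\backslash F_G$ is a ramified $G$-bundle on $N$; and since the composite $M_1 \to Y \to N$ equals $M_1 \to M \to N$, this quotient is canonically identified with $U(1)\backslash E_G$. For the Higgs field, the virtually basic clause in the definition of a Sasakian Higgs field means precisely that the pullback of $\theta$ to $\widetilde{E}_G$ is $U(1)$-invariant. Using the identification of $F^{1,0}$ on $M_1$ with $p^*T^{1,0}Y$, I would descend it to a holomorphic section $\theta'$ of $\text{ad}(F_G)\otimes \Omega^1_Y$ with $\theta'\wedge\theta' = 0$, and then appeal to \cite[Proposition 4.1]{Bi2} to descend the $\Gamma$-invariant $\theta'$ to a Higgs field on $\Gamma\backslash F_G = U(1)\backslash E_G$.

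The main obstacle I anticipate is verifying the fifth axiom for a ramified $G$-bundle --- that the isotropy group at a smooth point of the preimage divisor $\widehat{D}$ acts faithfully on the normal line. This should be extracted by combining the finite cyclic nature of the $U(1)$-isotropy on $M$ along $D_M$ (a consequence of quasi-regularity) with the transversality and smoothness guaranteed by Assumption \ref{assum}. A secondary technical point is ensuring that the holomorphic structure transported from $\widetilde{E}_G$ to $F_G$ is genuinely $\Gamma$-equivariantly algebraic, so that the quotient $\Gamma\backslash F_G$ satisfies the algebraic axioms of a ramified principal bundle rather than only their analytic analogues.
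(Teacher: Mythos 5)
Your proposal follows essentially the same route as the paper: pass to a Kawamata Galois cover of $N$ adapted to the multiplicities of the ${\rm U}(1)$--isotropy along the components of $D_N$, so that the pulled-back Sasakian manifold becomes regular, descend the pulled-back $G$--Higgs bundle to the cover's base, and then quotient by the Galois group via \cite[Proposition 4.1]{Bi2} to obtain the ramified $G$--Higgs bundle identified with ${\rm U}(1)\backslash E_G$. The one imprecision is that the cover cannot literally be ``the one from Section \ref{p2s}'' (there $M$ is built from a given parabolic bundle, whereas here $M$ is given), but the property you demand of it --- that the pulled-back ${\rm U}(1)$--bundle extends to a genuine regular one --- is exactly what the paper secures by choosing the Kawamata cover with $\beta^{-1}(D_i)\,=\,k_im_i\beta^{-1}(D_i)_{\rm red}$.
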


\begin{proof}
Consider the smooth complex projective variety $N$ and the simple normal crossing
divisor $D_N\,=\, p(D_M)$ on it. Let $\{D_i\}_{i=1}^n$ be the irreducible components
of $D_N$. For each irreducible component $D_i$, let $m_i\, >\, 0$ be the multiplicity of
$D_i$ associated to the projection $p$ from $M$. So $m_i$ is the order of the isotropy subgroup
of a general point of $p^{-1}(D_i)$ for the action of ${\rm U}(1)$ on $M$ (this
uses Assumption \ref{assum}). Given
this collection of pairs $\{(D_i,\, m_i)\}_{i=1}^n$ the ``covering lemma'' of Kawamata says the
following:

There is a smooth projective variety $Z$ and a (ramified) Galois covering
\begin{equation}\label{beta}
\beta\, :\, Z\, \longrightarrow\, N
\end{equation}
such that
\begin{enumerate}
\item the reduced divisor $\beta^{-1}(D_N)_{\rm red}$ is a simple normal crossing divisor
on $Z$, and

\item $\beta^{-1}(D_i)\,=\, k_im_i\beta^{-1}(D_i)_{\rm red}$ for all $1\, \leq\, i\, \leq\,
n$, where $k_i$ are positive integers.
\end{enumerate}
(See \cite[Theorem 1.1.1]{KMM}, \cite[Theorem 17]{Ka}.) The Galois group $\text{Gal}(\beta)$
for the covering in \eqref{beta} will be denoted by $\Gamma$.

Let $$\beta_1\, :=\, \beta\vert_{\beta^{-1}(N- D_N)}\, :\, \beta^{-1}(N- D_N)
\,\longrightarrow\, N- D_N $$ be the restriction of $\beta$. Consider the
principal ${\rm U}(1)$--bundle $M'\,:=\, M- D_M\, \longrightarrow\, N- D_N$ in
\eqref{p}. The pulled back principal ${\rm U}(1)$--bundle
$$
\beta^*_1 M'\, \longrightarrow\, \beta^{-1}(N- D_N)
$$
extends to a principal ${\rm U}(1)$--bundle $\widehat{Z}$
on $Z$. Indeed, this follows from the fact that
$\beta^{-1}(D_i)\,=\, k_im_i\beta^{-1}(D_i)_{\rm red}$. This $\widehat{Z}$ is a regular
Sasakian manifold with ${\rm U}(1)\backslash \widehat{Z}\,=\, Z$, and it fits in a commutative
diagram
$$
\begin{matrix}
\widehat{Z} &\stackrel{\delta}{\longrightarrow} & M\\
~\Big\downarrow \mu && ~\Big\downarrow p\\
Z &\stackrel{\beta}{\longrightarrow} & N
\end{matrix}
$$
where $p$ and $\beta$ are in the maps in \eqref{p} and \eqref{beta} respectively. The
map $\delta$ is a Galois covering with Galois group $\Gamma\, =\,\text{Gal}(\beta)$. 

Consider the pulled back $G$--Higgs bundle $((\delta^*E_G,\, \delta^*{\mathcal D}),\, 
\delta^*\theta)$ on $\widehat{Z}$. Since $\delta$ is a Galois covering with Galois group 
$\Gamma$. The Galois group $\Gamma$ acts on $((\delta^*E_G,\, \delta^*{\mathcal D}),\, 
\delta^*\theta)$. The actions of ${\rm U}(1)$ and $\Gamma$ on $\delta^*E_G$ commute. 
Since $\widehat{Z}$ is a regular Sasakian manifold, and ${\rm U}(1)\backslash 
\widehat{Z}\,=\, Z$, we conclude that there is a $G$--Higgs bundle $(E'_G,\, \theta')$ on 
$Z$ such that $((\delta^*E_G,\, \delta^*{\mathcal D}),\, \delta^*\theta)$ is the pullback 
of $(E'_G,\, \theta')$ to $\widehat{Z}$. The action of $\Gamma$ on $((\delta^*E_G,\, 
\delta^*{\mathcal D}),\, \delta^*\theta)$ produces an action of $\Gamma$ on the 
$G$--Higgs bundle $(E'_G,\, \theta')$. Hence $(E'_G,\, \theta')$ produces a ramified 
$G$--Higgs bundle on $Z/\Gamma\,=\, N$ (see \cite[Proposition 4.1]{Bi2}). Let $(E''_G,\, 
\theta'')$ be the ramified $G$--Higgs bundle on $N$ defined by $(E'_G,\, \theta')$.

Now it is straight-forward to check that $E''_G$ coincides with the 
quotient ${\rm U}(1)\backslash E_G$. Furthermore, the Higgs field $\theta''$ coincides 
with $\theta$ on $p^{-1}(N- D_N)$.
\end{proof}

The construction in Section \ref{p2s} of a Sasakian $G$--Higgs bundle from a parabolic $G$--Higgs 
bundle and the construction in Section \ref{s2p} of a parabolic $G$--Higgs bundle from a Sasakian 
$G$--Higgs bundle are evidently inverses of each other.

\section*{Acknowledgements}

We thank the referees for their helpful comments.
The authors acknowledge the support of their respective J. C. Bose Fellowships.


\begin{thebibliography}{ZZZZZ}

\bibitem[BBN1]{BBN1} V. Balaji, I. Biswas and D. S. Nagaraj, Principal
bundles over projective manifolds with parabolic structure over
a divisor, \textit{T\^ohoku Math. Jour.} \textbf{53}
(2001), 337--367.

\bibitem[BBN2]{BBN2} V. Balaji, I. Biswas and D. S. Nagaraj,
Ramified $G$-bundles as parabolic bundles, \textit{Jour.
Ramanujan Math. Soc.} \textbf{18} (2003), 123--138.

\bibitem[Bi1]{Bi1} I. Biswas, Connections on a parabolic principal bundle, II, {\it 
Canadian Math. Bull.} \textbf{52} (2009), 175--185.

\bibitem[Bi2]{Bi2} I. Biswas, Parabolic principal Higgs bundles, {\it Jour.
Ramanujan Math. Soc.} \textbf{23}, (2008), 311--325.

\bibitem[BS]{BS} I. Biswas and G. Schumacher, Vector bundles on Sasakian manifolds,
{\it Adv. Theo. Math. Phy.} {\bf 14} (2010), 541--561.

\bibitem[BM]{BM} I. Biswas and M. Mj, Higgs bundles on Sasakian manifolds,
to appear in \textit{Int. Math. Res. Not. 2017}, 
https://doi.org/10.1093/imrn/rnw329, arXiv:1607.07351.

\bibitem[BG]{BG} C. P. Boyer and K. Galicki, {\it Sasakian geometry},
Oxford Mathematical Monographs, Oxford University Press, Oxford, 2008.



\bibitem[Ka]{Ka} Y. Kawamata, Characterization of abelian varieties,
{\it Compositio Math.} {\bf 43} (1981), 253--276. 

\bibitem[KMM]{KMM} Y. Kawamata, K. Matsuda and K. Matsuki,
Introduction to the minimal model problem,
{\it Algebraic geometry}, Sendai, 1985, 283--360,
Adv. Stud. Pure Math., 10, North-Holland, Amsterdam, 1987.

\bibitem[No]{No} M. V. Nori, On the representations of the
fundamental group, \textit{Compositio Math.} \textbf{33} (1976),
29--41.


\end{thebibliography}
\end{document}